\begin{document}

\allowdisplaybreaks

\renewcommand{\thefootnote}{$\star$}

\newcommand{\arXivNumber}{1511.06057}

\renewcommand{\PaperNumber}{044}

\FirstPageHeading

\ShortArticleName{Polynomial Sequences Associated with the Moments of Hypergeometric Weights}

\ArticleName{Polynomial Sequences Associated with the Moments\\ of Hypergeometric Weights\footnote{This paper is a~contribution to the Special Issue
on Orthogonal Polynomials, Special Functions and Applications.
The full collection is available at \href{http://www.emis.de/journals/SIGMA/OPSFA2015.html}{http://www.emis.de/journals/SIGMA/OPSFA2015.html}}}

\Author{Diego DOMINICI}

\AuthorNameForHeading{D.~Dominici}

\Address{Department of Mathematics, State University of New York at New Paltz,\\ 1 Hawk Dr., New Paltz, NY 12561-2443, USA}
\Email{\href{mailto:dominicd@newpaltz.edu}{dominicd@newpaltz.edu}}
\URLaddress{\url{https://www2.newpaltz.edu/~dominicd/}}

\ArticleDates{Received November 23, 2015, in f\/inal form April 25, 2016; Published online April 29, 2016}

\Abstract{We present some families of polynomials related to the moments of weight
functions of hypergeometric type. We also consider dif\/ferent types of
generating functions, and give several examples.}

\Keywords{moments; hypergeometric functions; generating functions; Stieltjes transform}

\Classification{44A60; 33C20; 05A15}

\rightline{\it Dedicated to my daughter Malena, luz de mi vida!}

\renewcommand{\thefootnote}{\arabic{footnote}}
\setcounter{footnote}{0}

\section{Introduction}

Let $\{\mu_{n}\}$ be a sequence of complex numbers and
$\mathcal{L}\colon \mathbb{C}[x] \rightarrow\mathbb{C}$ be a linear
functional def\/ined by
\begin{gather*}
\mathcal{L}\big( x^{n}\big) =\mu_{n},\qquad n=0,1,\ldots.
\end{gather*}
Then, $\mathcal{L}$ is called the \emph{moment functional} determined by the
formal moment sequence $\{\mu_{n}\}$. The number~$\mu_{n}$ is
called the \emph{moment} of order~$n$. The task of f\/inding an explicit
representation for the functional $\mathcal{L}$ is called a \emph{moment
problem} \cite{MR0184042,MR0458081,MR0008438}.

A sequence $\{\Pi_{n}(x)\} \subset\mathbb{C}[x]$, with $\deg( \Pi_{n}) =n$ is called an
\emph{orthogonal polynomial sequence} with respect to~$\mathcal{L}$ provided
that~\cite{MR0481884}
\begin{gather*}
\mathcal{L} ( \Pi_{n}\Pi_{m} ) =K_{n}\delta_{n,m},\qquad
n,m=0,1,\ldots,
\end{gather*}
where $K_{n}\neq0$ and $\delta_{n,m}$ is Kronecker's delta.

The moments play a fundamental role in the theory of orthogonal polynomials
since, among other results, we have the determinantal representation
\begin{gather*}
\Pi_{n}(x) =C_{n}\left\vert
\begin{matrix}
\mu_{0} & \mu_{1} & \cdots & \mu_{n}\\
\mu_{1} & \mu_{2} & \cdots & \mu_{n+1}\\
\vdots & \vdots & \ddots & \vdots\\
\mu_{n-1} & \mu_{n} & \cdots & \mu_{2n-1}\\
1 & x & \cdots & x^{n}
\end{matrix}
\right\vert ,
\end{gather*}
for some normalization constant $C_{n}\neq0$, with
\begin{gather*}
\left\vert
\begin{matrix}
\mu_{0} & \mu_{1} & \cdots & \mu_{n}\\
\mu_{1} & \mu_{2} & \cdots & \mu_{n+1}\\
\vdots & \vdots & \ddots & \vdots\\
\mu_{n} & \mu_{n+1} & \cdots & \mu_{2n}
\end{matrix}
\right\vert \neq0,\qquad n=0,1,\ldots.
\end{gather*}
Given their importance, it is very striking that they are not explicitly
listed in the standard books on orthogonal polynomials, or even in
encyclopedic texts such as~\cite{MR2656096}. In fact, the only place where we
found a comprehensive enumeration of the moments of classical orthogonal
polynomials was the recent article~\cite{MR3286553}, based on the results
obtained in the Ph.D.\ Thesis of the f\/irst author~\cite{Njionou}\footnote{We
thank one of the referees for stressing this fact.}.

In this paper, we focus our attention on linear functionals def\/ined by
\begin{gather}
\mathcal{L}(f) =
{\displaystyle\sum\limits_{x=0}^{\infty}}
f(x)\rho\big( x;\vec{\alpha},\vec{\beta},c\big) ,
\label{L}
\end{gather}
where the weight function $\rho( x;\vec{\alpha}
,\vec{\beta},c) $ is of the form
\begin{gather}
\rho\big( x;\vec{\alpha},\vec{\beta},c\big)
=\frac{( \vec{\alpha})_{x}}{(
\vec{\beta}+1)_{x}}\frac{c^{x}}{x!}, \label{weight}
\end{gather}
with
\begin{gather*}
 ( \vec{\alpha} )_{x} = ( \alpha_{1})
_{x} ( \alpha_{2} )_{x}\cdots ( \alpha_{p} )_{x},\qquad
 ( \vec{\beta}+1 )_{x} = ( \beta_{1}+1 )
_{x} ( \beta_{2}+1 )_{x}\cdots ( \beta_{q}+1 )_{x},
\end{gather*}
and $(a)_{x}$ denotes the Pochhammer symbol (also called
shifted or rising factorial) def\/ined by~\cite[(5.2.4)]{MR2723248}
\begin{gather*}
(a)_{0} =1, \qquad (a)_{x} =a ( a+1 ) \cdots ( a+x-1 ),\qquad x\in\mathbb{N},
\end{gather*}
or by
\begin{gather*}
(a)_{x}=\frac{\Gamma ( a+x ) }{\Gamma (a ) },\qquad a+x\neq0,-1,\ldots,
\end{gather*}
and $\Gamma(z) $ is the Gamma function. Unless stated otherwise,
we always assume that
\begin{gather*}
\beta_{i}>-1,\qquad 1\leq i\leq q,
\end{gather*}
and we will use the notation
\begin{gather*}
\vec{\alpha} =\alpha_{1},\alpha_{1},\ldots,\alpha_{p},\qquad
\vec{\beta}+1 =\beta_{1}+1,\beta_{2}+1,\ldots,\beta_{q}+1.
\end{gather*}

Note that we have
\begin{gather*}
\frac{\rho ( x;\vec{\alpha},\vec{\beta},c )
}{\rho ( x;\vec{\alpha},\vec{\beta},c )
}=\frac{\eta(x) }{\phi ( x+1 ) },
\end{gather*}
with%
\begin{align}\label{phi-eta}
\phi(x) =x ( x+\beta_{1} ) ( x+\beta
_{2} ) \cdots ( x+\beta_{q} ) ,\qquad
\eta(x) =c ( x+\alpha_{1} ) ( x+\alpha
_{2} ) \cdots ( x+\alpha_{p} ) .
\end{align}
Hence, the weight function $\rho ( x;\vec{\alpha
},\vec{\beta},c ) $ satisf\/ies the \emph{Pearson equation} (see~\cite{Pearson} or \cite[(6.3)]{MR1761401})
\begin{gather*}
\Delta ( \phi\rho ) = ( \eta-\phi ) \rho,
\end{gather*}
where
\begin{gather*}
\Delta f(x)=f(x+1)-f(x)
\end{gather*}
is the forward dif\/ference operator. If we def\/ine $s$ by
\begin{gather*}
s=\max\big\{ \deg(\phi) -2,\deg(\phi-\eta)
-1\big\} ,
\end{gather*}
the sequence of polynomials orthogonal with respect to~$\mathcal{L}$ is
called \emph{semiclassical} of class~$s$~\cite{MR2936305,MR1270222}.

Weight functions of the form~(\ref{weight}) are also related to discrete
probability distributions (Poisson, Pascal, binomial, hypergeometric, etc.)~\cite{MR1224449}\footnote{We thank one of the editors for suggesting this reference.}. The Generalized Hypergeometric probability distributions were
studied by Adrienne W.~Kemp in her Ph.D.\ Thesis~\cite{Kemp}\footnote{Unfortunately, we haven't been able to obtain a copy of it.}. An
excellent re\-ference outlining the connections between the theory of
probability and orthogonal polynomials is~\cite{MR1761401}.

In \cite{MR3055670}, it was pointed out that since
\begin{gather}
\mu_{n}(c) =
\sum\limits_{x=0}^{\infty}
x^{n}\rho\big( x;\vec{\alpha},\vec{\beta},c\big) ,
\label{mun def}
\end{gather}
one has
\begin{gather}
\mu_{n+1}(c) =
\sum\limits_{x=0}^{\infty}
x^{n}x\rho\big( x;\vec{\alpha},\vec{\beta},c\big)
=\vartheta\mu_{n}(c) , \label{reqmu}
\end{gather}
where the dif\/ferential operator $\vartheta$ is def\/ined by \cite[(16.8.2)]{MR2723248}%
\begin{gather}
\vartheta f(c)=c\frac{df}{dc}. \label{theta}
\end{gather}
Successive applications of (\ref{reqmu}) give
\begin{gather*}
\mu_{n}=\vartheta^{n}\mu_{0}, 
\end{gather*}
and it follows that the f\/irst moment~$\mu_{0}$ determines the whole sequence
$\{ \mu_{n}\}$. If we use the operational formula~\cite{MR2798171}
\begin{gather*}
\vartheta^{k}=\sum\limits_{k=0}^{n}
\genfrac{\{}{\}}{0pt}{}{n}{k}
c^{k}\frac{d^{k}}{dc^{k}}, 
\end{gather*}
we have
\begin{gather}
\mu_{n}(c) =
\sum\limits_{k=0}^{n}
\genfrac{\{}{\}}{0pt}{}{n}{k}
c^{k}\frac{d^{k}\mu_{0}}{dc^{k}}, \label{munD}
\end{gather}
where $\genfrac{\{}{\}}{0pt}{}{n}{k}$ denote the Stirling numbers of the second kind def\/ined by~\cite[(26.8)]{MR2723248}
\begin{gather}
\genfrac{\{}{\}}{0pt}{}{n}{k}
=\frac{1}{k!}
{\displaystyle\sum\limits_{j=0}^{k}}
\binom{k}{j}(-1) ^{k-j}j^{n}. \label{stirling}
\end{gather}

From (\ref{weight}), we have
\begin{gather}
\mu_{0}(c) =
 \sum\limits_{x=0}^{\infty}
\rho\big( x;\vec{\alpha},\vec{\beta},c\big)
= {}_{p}F_{q}\left[
\begin{matrix}
\vec{\alpha}\\
\vec{\beta}
\end{matrix}
;c\right] , \label{mu0}
\end{gather}
where ${}_{p}F_{q}$ is the generalized hypergeometric function~\cite[(16.2.1)]{MR2723248}. Depending on the values of~$p$ and~$q$, we have to consider three
dif\/ferent cases:
\begin{enumerate}\itemsep=0pt
\item If $p<q+1$, $\mu_{0}(c) $ is an entire function of $c$.

\item If $p=q+1$, $\mu_{0}(c) $ is analytic inside the unit disk
$\vert c\vert <1$.

\item If $p>q+1$, the series~(\ref{mu0}) diverges for $c\neq0$, unless one or
more of the top parameters $\alpha_{i}$ is a negative integer. If we take
$\alpha_{1}=-N$, with $N\in\mathbb{N}$, then $\mu_{0}(c)$
becomes a polynomial of degree~$N$.
\end{enumerate}

Using the formula \cite[(16.3.1)]{MR2723248}
\begin{gather*}
\frac{d^{n}}{dc^{n}}\,{}_{p}F_{q}\left[
\begin{matrix}
\vec{a}\\
\vec{b}
\end{matrix}
;c\right] =\frac{( \vec{a})_{n}}{(
\vec{b})_{n}}\,{}_{p}F_{q}\left[
\begin{matrix}
\vec{a}+n\\
\vec{b}+n
\end{matrix}
;c\right] ,
\end{gather*}
in (\ref{munD}), we have
\begin{gather}
\mu_{n}(c) =
\sum\limits_{k=0}^{n}
\genfrac{\{}{\}}{0pt}{}{n}{k}
c^{k}\frac{( \vec{\alpha})_{k}}{(
\vec{\beta}+1)_{k}}\,{}_{p}F_{q}\left[
\begin{matrix}
\vec{\alpha}+k\\
\vec{\beta}+1+k
\end{matrix}
;c\right] . \label{mun gen}
\end{gather}
Although~(\ref{mun gen}) seems to give an explicit formula for the moments,
this type of sums (to our knowledge) can't be evaluated in closed form.

An alternative is to consider \emph{generalized moments}, def\/ined by~\cite{MR1688958}
\begin{gather*}
\nu_{n}=\mathcal{L} ( \varphi_{n}) ,
\end{gather*}
where $\{ \varphi_{n}\} \subset\mathbb{C}[x] $,
with $\deg(\varphi_{n}) =n$. Choosing\footnote{As suggested by
one of the anonymous referees.}
\begin{gather*}
\varphi_{n}(x) =(x-n+1)_{n},
\end{gather*}
we get%
\begin{gather*}
\nu_{n}(c) =
\sum\limits_{x=0}^{\infty}
(x-n+1)_{n}\frac{( \vec{\alpha})_{x}
}{( \vec{\beta}+1)_{x}}\frac{c^{x}}{x!}=\frac{(
\vec{\alpha})_{k}}{( \vec{\beta}+1)
_{k}}\,{}_{p}F_{q}\left[
\begin{matrix}
\vec{\alpha}+k\\
\vec{\beta}+1+k
\end{matrix}
;c\right] .
\end{gather*}
Since \cite[(26.8.10)]{MR2723248}
\begin{gather}
x^{n}=
\sum\limits_{k=1}^{n}
\genfrac{\{}{\}}{0pt}{}{n}{k}
(x-k+1)_{k}, \label{inversion1}
\end{gather}
we have
\begin{gather*}
\mu_{n}(c) =\mathcal{L}\big( x^{n}\big) =
\sum\limits_{k=1}^{n}
\genfrac{\{}{\}}{0pt}{}{n}{k}
\mathcal{L}( \varphi_{k}) =
\sum\limits_{k=1}^{n}
\genfrac{\{}{\}}{0pt}{}{n}{k}
\nu_{k}(c) ,
\end{gather*}
and we recover~(\ref{mun gen}).

In a series of papers \cite{MR2364955,MR2401156,MR2427672,MR2741218,MR2966475,MR3277948}, we studied polynomial
solutions of dif\/ferential-dif\/ference equations of the form%
\begin{gather}
P_{n+1}(x)=A_{n}(x)P_{n}^{\prime}(x)+B_{n}(x)P_{n}(x),\qquad n\geq0,
\label{eq:diff2}
\end{gather}
where $P_{0}(x)=1$, and $A_{n}(x)$, $B_{n}(x)$ are polynomials of degree at
most $2$ and $1$ respectively. In this article, we consider some extensions of~(\ref{eq:diff2}) to the multidimensional case, with $P_{n}(x)$ replaced by a~vector $\vec{P}_{n}(x)$, and $B_{n}(x)$ replaced by
a matrix~$\mathbf{B}_{n}(x)$.

In \cite{MR1451259} and~\cite{MR1627382}, it was shown that some families of
orthogonal polynomials can be represented as moments of probability measures.
In this work, we do the opposite and express the mo\-ments~$\mu_{n}(c)$ in terms of some polynomials~$\vec{P}_{n}(c)$.

The paper is organized as follows: in Section~\ref{section2}, we derive a
dif\/ferential-dif\/ference equation for the polynomials $\vec{P}_{n}(c) $ associated to the moments~$\mu_{n}(c)$.
We also f\/ind formulas for the exponential generating functions and Stieltjes
transforms of $\mu_{n}(c) $ and $\vec{P}_{n}\left(
c\right) $. In Section~\ref{section3}, we apply our results to most of the families of
discrete semiclassical orthogonal polynomials of class~1 studied in~\cite{MR3227440}, except for limiting and $c=1$ cases. Finally, in Section~\ref{section4}, we outline some conclusions and possible future directions.

\section{Main results}\label{section2}

The function $\mu_{0}(c) $ satisf\/ies the dif\/ferential equation
\cite[(16.8.3)]{MR2723248}
\begin{gather}
\big[ \vartheta ( \vartheta+\beta_{1} ) \cdots (
\vartheta+\beta_{q} ) -c ( \vartheta+\alpha_{1} )
\cdots ( \vartheta+\alpha_{p} ) \big] \mu_{0}=0, \label{ODE}
\end{gather}
where $\vartheta$ was def\/ined in~(\ref{theta}). We can rewrite the ODE~(\ref{ODE}) as
\begin{gather}
\vartheta^{q+1}\mu_{0} =
\sum\limits_{k=0}^{q}
\sigma_{k}(c) \vartheta^{k}\mu_{0},\qquad q>p-1,\nonumber\\
(1-c) \vartheta^{q+1}\mu_{0} =
\sum\limits_{k=0}^{q}
\sigma_{k}(c) \vartheta^{k}\mu_{0},\qquad q=p-1,\label{diffmu0}\\
c\vartheta^{p}\mu_{0} =
\sum\limits_{k=0}^{p-1}
\sigma_{k}(c) \vartheta^{k}\mu_{0},\qquad q<p-1,\nonumber
\end{gather}
where the coef\/f\/icients $\sigma_{k}(c) $ are linear functions of~$c$.

Introducing the quantities
\begin{gather*}
\vec{\mu}(c) =
\begin{bmatrix}
\mu_{0}(c) \\
\mu_{1}(c) \\
\vdots\\
\mu_{\xi}(c)
\end{bmatrix}
,\qquad\xi=\max \{ p-1,q \} ,
\end{gather*}
and
\begin{gather*}
(\lambda,\tau) =
\begin{cases}
(1,0) , & q+1>p, \\
(1,-1) , & q+1=p, \\
(0,1) , & q+1<p,
\end{cases}
\end{gather*}
we can rewrite (\ref{diffmu0}) as%
\begin{gather*}
( \lambda+c\tau) \mu_{\xi+1}=
\sum\limits_{k=0}^{\xi}
\sigma_{k}(c) \mu_{k}.
\end{gather*}
If we def\/ine the $(\xi+1) \times(\xi+1) $ matrix
$\mathbf{M}(c) $ by
\begin{gather*}
\mathbf{M}_{i,j}=
\begin{cases}
\sigma_{j}, & i=\xi+1,\quad 0\leq j\leq\xi,\\
\lambda+c\tau, & j=i+1,\quad 0\leq i\leq\xi-1,\\
0, & \text{otherwise},
\end{cases}
\end{gather*}
we get%
\begin{gather}
\mathbf{M}\,\vec{\mu}= ( \lambda+c\tau ) \vartheta
 \vec{\mu}.\label{Mmu}
\end{gather}
We can now state our main result.

\begin{proposition}
\label{main}Let the $(\xi+1)$-vector polynomials
$\vec{P}_{n}(c) $ be defined by
\begin{gather}
\vec{P}_{0}(c) =
\begin{bmatrix}
1\\
0\\
\vdots\\
0
\end{bmatrix}
\label{P0}
\end{gather}
and
\begin{gather}
\vec{P}_{n+1}=c ( \lambda+c\tau ) \frac
{d \vec{P}_{n}}{dc}+\big( \mathbf{M}^{T}-n\tau c\mathbf{I}
\big) \vec{P}_{n},\qquad n=0,1,\ldots,\label{reqPn}
\end{gather}
where $\mathbf{I}$ is the $(\xi+1) \times(\xi+1) $
identity matrix. Then,
\begin{gather}
\mu_{n}(c) =(\lambda+c\tau) ^{-n}
 \vec{P}_{n}(c) \cdot\vec{\mu} (c) ,\qquad n=0,1,\ldots. \label{Pmu}
\end{gather}
\end{proposition}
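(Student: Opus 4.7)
The plan is to prove the identity \eqref{Pmu} by induction on $n$, with the recurrence \eqref{reqPn} emerging naturally as the output of applying $\vartheta$ to the right-hand side.

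The base case $n=0$ is immediate: by \eqref{P0} the vector $\vec{P}_0$ selects the first coordinate of $\vec{\mu}$, so $\vec{P}_0(c)\cdot\vec{\mu}(c)=\mu_0(c)$, matching $(\lambda+c\tau)^{0}\mu_0$.

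For the inductive step, assume $\mu_n=(\lambda+c\tau)^{-n}\vec{P}_n\cdot\vec{\mu}$ and compute $\mu_{n+1}=\vartheta\mu_n = c\,\frac{d}{dc}\bigl[(\lambda+c\tau)^{-n}\vec{P}_n\cdot\vec{\mu}\bigr]$ using the product rule. This produces three pieces: differentiating the prefactor yields $-n\tau c(\lambda+c\tau)^{-n-1}\vec{P}_n\cdot\vec{\mu}$; differentiating $\vec{P}_n$ gives $c(\lambda+c\tau)^{-n}\frac{d\vec{P}_n}{dc}\cdot\vec{\mu}$; and differentiating $\vec{\mu}$ produces $(\lambda+c\tau)^{-n}\vec{P}_n\cdot\vartheta\vec{\mu}$. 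The crucial input here is the matrix equation \eqref{Mmu}, which lets us replace $\vartheta\vec{\mu}$ by $(\lambda+c\tau)^{-1}\mathbf{M}\vec{\mu}$, so this third piece becomes $(\lambda+c\tau)^{-n-1}\vec{P}_n\cdot\mathbf{M}\vec{\mu}=(\lambda+c\tau)^{-n-1}(\mathbf{M}^T\vec{P}_n)\cdot\vec{\mu}$ by moving the matrix across the dot product via its transpose.

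Factoring the common $(\lambda+c\tau)^{-n-1}$ and collecting the three terms yields
\begin{gather*}
\mu_{n+1}=(\lambda+c\tau)^{-n-1}\bigl[c(\lambda+c\tau)\tfrac{d\vec{P}_n}{dc}+(\mathbf{M}^T-n\tau c\,\mathbf{I})\vec{P}_n\bigr]\cdot\vec{\mu},
\end{gather*}
and the bracketed expression is exactly the definition of $\vec{P}_{n+1}$ given by \eqref{reqPn}, closing the induction.

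The argument is essentially a one-line calculation once the transpose move is in place; there is no real obstacle. The only point that requires care is bookkeeping the factor of $c$ coming from $\vartheta=c\,d/dc$ and verifying that the derivative of $(\lambda+c\tau)^{-n}$ is what generates the $-n\tau c\,\mathbf{I}$ correction in \eqref{reqPn}. It is also worth checking that \eqref{reqPn} preserves the polynomial nature of $\vec{P}_n$ (the entries of $\mathbf{M}$ and the factor $\lambda+c\tau$ are at most linear in $c$), so the formula \eqref{Pmu} is well-defined as a genuine polynomial identity divided by a power of $\lambda+c\tau$.
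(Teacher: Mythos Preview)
Your proof is correct and follows essentially the same approach as the paper: both arguments verify that the sequence $r_n=(\lambda+c\tau)^{-n}\vec{P}_n\cdot\vec{\mu}$ satisfies $r_{n+1}=\vartheta r_n$ with $r_0=\mu_0$, using \eqref{Mmu} and the transpose identity $\vec{P}_n\cdot\mathbf{M}\vec{\mu}=(\mathbf{M}^T\vec{P}_n)\cdot\vec{\mu}$. The only cosmetic difference is that the paper starts from $\vec{P}_{n+1}\cdot\vec{\mu}$ and works backward to $\vartheta r_n$, whereas you start from $\vartheta r_n$ and expand forward; the underlying computation is identical.
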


\begin{proof}
Using (\ref{Mmu}) and (\ref{reqPn}), we have
\begin{gather*}
\vec{P}_{n+1}\cdot\vec{\mu} = ( \lambda
+c\tau ) \vartheta \vec{P}_{n}\cdot\vec{\mu
}+\big( \mathbf{M}^{T}-n\tau c\mathbf{I}\big) \vec{P}
_{n}\cdot\vec{\mu}\\
\hphantom{\vec{P}_{n+1}\cdot\vec{\mu}}{}
 = ( \lambda+c\tau ) \vartheta \vec{P}_{n}
\cdot\vec{\mu}+\vec{P}_{n}\cdot ( \mathbf{M}-n\tau
c\mathbf{I} ) \vec{\mu}\\
\hphantom{\vec{P}_{n+1}\cdot\vec{\mu}}{}
 = ( \lambda+c\tau ) \vartheta \vec{P}_{n}
\cdot\vec{\mu}+\vec{P}_{n}\cdot ( \lambda
+c\tau ) \vartheta \vec{\mu} -n\tau c \vec{P}_{n}\cdot\vec{\mu}.
\end{gather*}
Multiplying by $( \lambda+c\tau) ^{-n-1}$ we get
\begin{gather*}
(\lambda+c\tau) ^{-n-1} \vec{P}_{n+1}\cdot
\vec{\mu} =( \lambda+c\tau) ^{-n}\vartheta
\vec{P}_{n}\cdot\vec{\mu}+\vec{P}_{n}
\cdot(\lambda+c\tau) ^{-n}\vartheta \vec{\mu} -n\tau c(\lambda+c\tau) ^{-n-1} \vec{P}_{n}
\cdot\vec{\mu}\\
\hphantom{(\lambda+c\tau) ^{-n-1} \vec{P}_{n+1}\cdot \vec{\mu}}{}
 =\vartheta\big[ (\lambda+c\tau) ^{-n} \vec{P}_{n}\cdot\vec{\mu}\big] .
\end{gather*}
Thus, the sequence
\begin{gather*}
r_{n}=(\lambda+c\tau) ^{-n}\vec{P}_{n}\cdot \vec{\mu}
\end{gather*}
satisf\/ies the recurrence $r_{n+1}=\vartheta r_{n}$ with initial condition
\begin{gather*}
r_{0}=\vec{P}_{0}\cdot\vec{\mu}=\mu_{0}.
\end{gather*}
From (\ref{reqmu}), we conclude that $r_{n}=\mu_{n}$.
\end{proof}

\subsection{Generating functions}\label{section2.1}

Let's consider the exponential generating function for the moments, def\/ined by
the formal power series
\begin{gather*}
G_{\mu}(c,w)=
\sum\limits_{n=0}^{\infty}
\mu_{n}(c) \frac{w^{n}}{n!}.
\end{gather*}
Given that
\begin{gather*}
\mu_{n+1}=c\mu_{n}^{\prime},
\end{gather*}
we have \cite{MR2172781}
\begin{gather*}
\frac{\partial G_{\mu}}{\partial w}=c\frac{\partial G_{\mu}}{\partial c},
\end{gather*}
with general solution
\begin{gather*}
G_{\mu}(c,w)=H\big( ce^{w}\big)
\end{gather*}
for some function $H(w)$. But since
\begin{gather*}
G_{\mu}(c,0)=\mu_{0}(c) ,
\end{gather*}
we conclude that
\begin{gather}
G_{\mu}(c,w)=\mu_{0}\big( ce^{w}\big) . \label{Gmu}
\end{gather}

Using (\ref{Pmu}), we see that the exponential generating function for the
polynomials $\vec{P}_{n}(c)$
\begin{gather*}
\vec{G}_{P}(c,w)=
\sum\limits_{n=0}^{\infty}
\vec{P}_{n}(c) \frac{w^{n}}{n!}
\end{gather*}
satisf\/ies
\begin{gather*}
G_{\mu}( c,( \lambda+\tau c) w) =\vec{G}_{P}(c,w)\cdot\vec{\mu}(c) ,
\end{gather*}
or, using (\ref{Gmu}),
\begin{gather}
\vec{G}_{P}(c,w)\cdot\vec{\mu}(c) =\mu
_{0}\big( ce^{( \lambda+\tau c) w}\big) . \label{GPn}
\end{gather}

\subsection{Stieltjes transform}\label{section2.2}

A dif\/ferent type of generating function for the moments that is very important
in the theory of orthogonal polynomials is the Stieltjes transform (or~$Z$
transform), that can be def\/ined by the formal Laurent series
\begin{gather*}
S_{\mu}(c,z)=
\sum\limits_{n=0}^{\infty}
\frac{\mu_{n}(c) }{z^{n+1}}.
\end{gather*}
We have
\begin{gather*}
S_{\mu}(c,z) =
\sum\limits_{n=0}^{\infty}
\frac{\mu_{n}(c) }{z^{n+1}}=
\sum\limits_{n=-1}^{\infty}
\frac{\mu_{n+1}(c) }{z^{n+2}}=\frac{1}{z}
\sum\limits_{n=-1}^{\infty}
\frac{\mu_{n+1}(c) }{z^{n+1}}
 =\frac{1}{z}\left[ \mu_{0}(c) +
\sum\limits_{n=0}^{\infty}
\frac{\mu_{n+1}(c) }{z^{n+1}}\right] .
\end{gather*}
Hence,
\begin{gather}
\sum\limits_{n=0}^{\infty}
\frac{\mu_{n+1}(c) }{z^{n+1}}=zS_{\mu}(c,z)-\mu_{0}(c) \label{shift}
\end{gather}
and using (\ref{reqmu}), we get
\begin{gather}
c\frac{\partial S_{\mu}}{\partial c}=zS_{\mu}-\mu_{0}(c).\label{PDES}
\end{gather}
From (\ref{mun def}), we have
\begin{gather*}
S_{\mu}(0,z)=\frac{1}{z}
\end{gather*}
and solving (\ref{PDES}) we obtain
\begin{gather*}
S_{\mu}(c,z)=-c^{z}
\int_{0}^{c}
\frac{\mu_{0}(x) }{x^{z+1}}dx=-
\int_{0}^{1}
\frac{\mu_{0}(ct) }{t^{z+1}}dt.
\end{gather*}
Using the recurrence relation for the Gamma function, we can write%
\begin{gather*}
-
\int_{0}^{1}
\frac{\mu_{0}(ct) }{t^{z+1}}dt=\frac{1}{z}\frac{\Gamma(
1-z) }{\Gamma(1) \Gamma(-z)}
\int_{0}^{1}
\frac{\mu_{0}(ct) }{t^{z+1}}dt,
\end{gather*}
and therefore
\begin{gather}
S_{\mu}(c,z)=\frac{1}{z}\,{}_{p+1}F_{q+1}\left[
\begin{matrix}
-z,\ \vec{\alpha}\\
1-z,\ \vec{\beta}+1
\end{matrix}
;c\right] ,\label{stiltjes}
\end{gather}
where we have used the integral representation \cite[(16.5.2)]{MR2723248}
\begin{gather*}
{}_{p+1}F_{q+1}\left[
\begin{matrix}
\alpha_{0},\ \vec{\alpha}\\
\beta_{0},\ \vec{\beta}
\end{matrix}
;c\right]
 =\frac{\Gamma ( \beta_{0}) }{\Gamma( \alpha_{0})
\Gamma( \beta_{0}-\alpha_{0}) }
\int_{0}^{1}
t^{\alpha_{0}-1}( 1-t) ^{\beta_{0}-\alpha_{0}-1}\,{}_{p}
F_{q}\left[
\begin{matrix}
\vec{\alpha}\\
\vec{\beta}
\end{matrix}
;ct\right] dt.
\end{gather*}
We derived (\ref{stiltjes}) in \cite{MH} using a dif\/ferent approach.

If we def\/ine the Stieltjes transform of $\vec{P}_{n}(c) $ by
\begin{gather*}
\vec{S}_{P}(c,z) =
\sum\limits_{n=0}^{\infty}
\frac{\vec{P}_{n}(c) }{z^{n+1}},
\end{gather*}
then it follows from (\ref{Pmu}) that
\begin{gather*}
\frac{1}{\lambda+\tau c}S_{\mu}\left( c,\frac{z}{\lambda+\tau c}\right)
=\vec{S}_{P}(c,z) \cdot\vec{\mu}(c) ,
\end{gather*}
or
\begin{gather}
\vec{S}_{P}(c,z) \cdot\vec{\mu} (
c ) =\frac{1}{z}\,{}_{p+1}F_{q+1}\left[
\begin{matrix}
-\frac{z}{\lambda+\tau c},\ \vec{\alpha}\\
1-\frac{z}{\lambda+\tau c},\ \vec{\beta}+1
\end{matrix}
;c\right] . \label{laplaceG}
\end{gather}

\section{Examples}\label{section3}

\strut In \cite{MR3227440} we studied all families of semiclassical
polynomials of class $s\leq1$ orthogonal with respect to~(\ref{L}). When
$s=0$, we have three canonical cases (the \emph{discrete classical
polynomials}):
$$
\begin{tabular}
[c]{|c|c|c|}\hline
$\deg(\eta) $ & $\deg(\phi) $ & \\\hline
$0$ & $1$ & Charlier\\\hline
$1$ & $1$ & Meixner\\\hline
$2$ & $2$ & Hahn\\\hline
\end{tabular}
$$
where $\phi(x) $ and $\eta(x) $ were def\/ined in~(\ref{phi-eta}). These polynomials are associated with the Poisson, Pascal,
and hypergeometric probability distributions, and results about their moments
have appeared in many places before (see~\cite{MR1224449}, for instance).

In \cite{MR3286553}, the authors used (\ref{inversion1}) and inversion
formulas of the form
\begin{gather*}
(x-n+1)_{n}=\sum\limits_{k=0}^{n}
c_{n,k}Q_{k}(x) ,
\end{gather*}
to derive expressions for the moments and exponential generating functions of
the discrete classical polynomials. Stieltjes' transforms were not considered.

When $s=1$, we obtained f\/ive cases:%
$$
\begin{tabular}
[c]{|c|c|c|}\hline
$\deg(\eta) $ & $\deg(\phi) $ & \\\hline
$0$ & $2$ & generalized Charlier\\\hline
$1$ & $2$ & generalized Meixner\\\hline
$2$ & $1$ & generalized Krawtchouk\\\hline
$2$ & $2$ & generalized Hahn of type I\\\hline
$3$ & $3$ & generalized Hahn of type II\\\hline
\end{tabular}
$$
The moments of these polynomials have not (to our knowledge) been studied before.

\subsection{Charlier}\label{section3.1}

The Charlier polynomials \cite[(18.20.8)]{MR2723248}
\begin{gather*}
C_{n}(x;c) ={}_{2}F_{0}\left(
\begin{matrix}
-n,-x\\
-
\end{matrix}
;-\frac{1}{c}\right)
\end{gather*}
are orthogonal with respect to the weight function \cite{MR2300274}
\begin{gather*}
\rho_{C}(x) =\frac{c^{x}}{x!},\qquad c>0.
\end{gather*}
In this case, we have
\begin{gather*}
\mu_{0}(c) ={}_{0}F_{0}\left[
\begin{matrix}
-\\
-
\end{matrix}
;c\right] =e^{c},
\end{gather*}
and hence
\begin{gather*}
\xi=0,\qquad(\lambda,\tau) =(1,0) .
\end{gather*}
From (\ref{ODE}) we see that $\mu_{0}(c) $ satisf\/ies the ODE
\begin{gather*}
(\vartheta-c) \mu_{0}=0,
\end{gather*}
which implies $\mu_{1}=c\mu_{0}$. Thus, $\sigma_{0}(c) =c$, and
Proposition~\ref{main} gives
\begin{gather*}
\mu_{n}(c) =P_{n}(c) \mu_{0}(c) ,
\end{gather*}
with $P_{n}(c) $ def\/ined by $P_{0}(c) =1$ and
\begin{gather}
P_{n+1}=c\frac{dP_{n}}{dc}+cP_{n}. \label{Bell}
\end{gather}
The polynomials satisfying~(\ref{Bell}) are known as Bell (or Touchard, or
exponential) polynomials. It is well known that they have the explicit
representation~\cite{MR2583007}
\begin{gather}
P_{n}(c) =
\sum\limits_{k=0}^{n}
\genfrac{\{}{\}}{0pt}{}{n}{k}
c^{k}, \label{bell}
\end{gather}
and therefore (see also \cite[equation~(44)]{MR3286553})
\begin{gather*}
\mu_{n}(c) =e^{c}
\sum\limits_{k=0}^{n}
\genfrac{\{}{\}}{0pt}{}{n}{k}
c^{k},
\end{gather*}
in agreement with (\ref{mun gen}).

Using (\ref{Gmu}) and (\ref{GPn}), we see that the generating functions of
$\mu_{n}(c) $ and $P_{n}(c) $ are given by (see
also \cite[equation~(50)]{MR3286553})
\begin{gather*}
G_{\mu}(c,w)=\mu_{0}\big( ce^{w}\big) =e^{ce^{w}}, 
\end{gather*}
and
\begin{gather*}
G_{P}(c,w)=\frac{e^{ce^{w}}}{e^{c}}=e^{c( e^{w}-1) }.
\end{gather*}

\subsection{Meixner}\label{section3.2}

The Meixner polynomials \cite[(18.20.7)]{MR2723248}
\begin{gather*}
M_{n}(x;\alpha,c) ={}_{2}F_{1}\left(
\begin{matrix}
-n,-x\\
\alpha
\end{matrix}
;1-\frac{1}{c}\right)
\end{gather*}
are orthogonal with respect to the weight function \cite[(18.19)]{MR2723248}
\begin{gather}
\rho_{M}(x;\alpha,c) =(\alpha)_{x}\frac{c^{x}
}{x!},\qquad 0<c<1,\qquad\alpha>0. \label{rhomeixner}
\end{gather}
In this case, we have%
\begin{gather}
\mu_{0}^{M}(c;\alpha) ={}_{1}F_{0}\left[
\begin{matrix}
\alpha\\
-
\end{matrix}
;c\right] =(1-c) ^{-\alpha}, \label{mu0meixner}
\end{gather}
and hence
\begin{gather*}
\xi=0,\qquad(\lambda,\tau) =(1,-1) .
\end{gather*}
From (\ref{ODE}) we see that $\mu_{0}^{M}(c;\alpha) $ satisf\/ies
the ODE%
\begin{gather*}
[ \vartheta-c(\vartheta+\alpha) ] \mu_{0}^{M}=0,
\end{gather*}
which implies%
\begin{gather*}
(1-c) \mu_{1}^{M}=\alpha c\mu_{0}^{M}.
\end{gather*}
Thus, $\sigma_{0}(c) =\alpha c$, and Proposition \ref{main} gives
\begin{gather}
\mu_{n}^{M}(c;\alpha) =(1-c) ^{-n} P_{n}
^{M}(c) \mu_{0}^{M}(c;\alpha) , \label{mu meixner}
\end{gather}
with $P_{n}^{M}(c) $ def\/ined by $P_{0}^{M}(c) =1$
and (see also \cite[equation~(5.68)]{Njionou})
\begin{gather}
P_{n+1}^{M}=c(1-c) \frac{dP_{n}^{M}}{dc}+ ( \alpha +n) cP_{n}^{M}. \label{DDMeixner}
\end{gather}

From (\ref{mun gen}), we have (see also \cite[equation~(43)]{MR3286553})
\begin{gather*}
\mu_{n}^{M}(c;\alpha) =
\sum\limits_{k=0}^{n}
\genfrac{\{}{\}}{0pt}{}{n}{k}
c^{k}(\alpha)_{k} (1-c) ^{-(\alpha+k) },
\end{gather*}
and therefore
\begin{gather}
P_{n}^{M}(c) =
\sum\limits_{k=0}^{n}
\genfrac{\{}{\}}{0pt}{}{n}{k}
(\alpha)_{k} c^{k}(1-c) ^{n-k}. \label{meixner}
\end{gather}

Using (\ref{Gmu}) and (\ref{GPn}), we see that the generating functions of
$\mu_{n}^{M}(c;\alpha) $ and $P_{n}^{M}(c) $ are
given by (see also \cite[equation~(49)]{MR3286553})
\begin{gather*}
G_{\mu}(c,w)=\mu_{0}\big( ce^{w}\big) =\big( 1-ce^{w}\big)
^{-\alpha}, 
\end{gather*}
and
\begin{gather*}
G_{P}(c,w)=\frac{\big[ 1-ce^{(1-c) w}\big] ^{-\alpha}
}{(1-c) ^{-\alpha}}=\left[ \frac{1-ce^{(1-c) w}}{1-c}\right] ^{-\alpha}.
\end{gather*}

\subsubsection{Krawtchouk polynomials}

\strut The Krawtchouk polynomials \cite[(18.20.6)]{MR2723248}
\begin{gather*}
K_{n} ( x;\mathfrak{p,}N ) ={}_{2}F_{1}\left(
\begin{matrix}
-n,-x\\
-N
\end{matrix}
;\frac{1}{\mathfrak{p}}\right) ,
\end{gather*}
are orthogonal with respect to the weight function \cite{MR2390273}
\begin{gather*}
\rho_{K} ( x;\mathfrak{p,}N ) =\binom{N}{x}\mathfrak{p}^{x} (
1-\mathfrak{p} ) ^{N-x},\qquad N\in\mathbb{N},\qquad 0<\mathfrak{p}<1.
\end{gather*}
It is related to the weight function of the Meixner polynomials~(\ref{rhomeixner}) by
\begin{gather}
\rho_{K} ( x;\mathfrak{p,}N ) = ( 1-\mathfrak{p} )
^{N}\frac{( -N)_{x}}{x!}\left( \frac{\mathfrak{p}}
{\mathfrak{p}-1}\right) ^{x}=( 1-\mathfrak{p}) ^{N}\rho
_{M}\left( x;-N,\frac{\mathfrak{p}}{\mathfrak{p}-1}\right) .\label{rhokrawt}
\end{gather}

Using (\ref{rhokrawt}) in (\ref{mu0meixner}), we obtain
\begin{gather*}
\mu_{0}^{K} ( \mathfrak{p} ) = ( 1-\mathfrak{p} )
^{N}\mu_{0}^{M}\left( -N,\frac{\mathfrak{p}}{\mathfrak{p}-1}\right) =1.
\end{gather*}
and using (\ref{mu meixner}) and (\ref{meixner}), we get $\mu_{n}^{K} (
\mathfrak{p} ) = P_{n}^{K} ( \mathfrak{p} ) $, with (see
also \cite[equation~(42)]{MR3286553})
\begin{gather*}
P_{n}^{K} ( \mathfrak{p} ) =
 \sum\limits_{k=0}^{n}
\genfrac{\{}{\}}{0pt}{}{n}{k}
\binom{N}{k}k! \mathfrak{p}^{k}. 
\end{gather*}
From (\ref{DDMeixner}) and (\ref{rhokrawt}), we have
\begin{gather*}
P_{n+1}^{K}=\mathfrak{p} ( 1-\mathfrak{p} ) \frac{dP_{n}^{K}
}{d\mathfrak{p}}+N\mathfrak{p}P_{n}^{K}.
\end{gather*}

To obtain the generating function of $P_{n}^{K}( \mathfrak{p})$,
we can use \cite[(26.8.12)]{MR2723248}
\begin{gather}
\sum\limits_{n=0}^{\infty}
\genfrac{\{}{\}}{0pt}{}{n}{k}
\frac{w^{n}}{n!}=\frac{( e^{w}-1) ^{k}}{k!}, \label{gen Stirling}
\end{gather}
and we get (see also \cite[equation~(48)]{MR3286553})
\begin{gather*}
G_{P}(\mathfrak{p},w)=\big[ 1+\big( e^{w}-1\big) \mathfrak{p}\big]^{N}.
\end{gather*}

\subsection{Generalized Charlier}\label{section3.3}

These polynomials are orthogonal with respect to the weight function~\cite{MR2996960}
\begin{gather*}
\rho(x;\beta,c) =\frac{1}{(\beta+1)_{x}}
\frac{c^{x}}{x!},\qquad c>0,\qquad \beta>-1.
\end{gather*}
In this case, we have
\begin{gather*}
\mu_{0}(c) ={}_{0}F_{1}\left[
\begin{matrix}
-\\
\beta+1
\end{matrix}
;c\right] =c^{-\frac{\beta}{2}}\Gamma(\beta+1) I_{\beta}\big(
2\sqrt{c}\big) ,
\end{gather*}
where $I_{\beta}(z) $ is the modif\/ied Bessel function of the
f\/irst kind \cite[(10.39.9)]{MR2723248}, and hence
\begin{gather*}
\xi=1,\qquad(\lambda,\tau) =(1,0) .
\end{gather*}
From (\ref{ODE}) we see that $\mu_{0}(c) $ satisf\/ies the ODE
\begin{gather*}
\vartheta ( \vartheta+\beta ) \mu_{0}-c\mu_{0}=0,
\end{gather*}
which implies
\begin{gather*}
\mu_{2}=c\mu_{0}-\beta\mu_{1},
\end{gather*}
and therefore
\begin{gather*}
\sigma_{0}(c) =c,\qquad\sigma_{1}(c) =-\beta.
\end{gather*}
Proposition \ref{main} gives
\begin{gather*}
\mu_{n}(c) =\vec{P}_{n}(c) \cdot\vec{\mu}(c) ,
\end{gather*}
with $\vec{P}_{n}(c) $ def\/ined by~(\ref{P0}) and
\begin{gather}
\vec{P}_{n+1}=c\frac{d \vec{P}_{n}}{dc}+
\begin{bmatrix}
0 & c\\
1 & -\beta
\end{bmatrix}
 \vec{P}_{n}.\label{dd gen charlier}
\end{gather}

To obtain the Stieltjes transform of $\vec{P}_{n}(c)$, let's write
\begin{gather*}
\vec{P}_{n}(c) =\left[
\begin{matrix}
Q_{n}(c) \\
R_{n}(c)
\end{matrix}
\right] ,\qquad\vec{S}_{P}(c,z) =\left[
\begin{matrix}
U(c,z)\\
V(c,z)
\end{matrix}
\right] .
\end{gather*}
From (\ref{dd gen charlier}), we have
\begin{gather*}
Q_{n+1} =cQ_{n}^{\prime}+cR_{n},\qquad
R_{n+1} =cR_{n}^{\prime}-\beta R_{n}+Q_{n},
\end{gather*}
with $Q_{0}=1$, $R_{0}=0$. Using~(\ref{shift}), we get
\begin{gather}\label{UV laplace}
zU-1 =c\frac{\partial U}{\partial c}+cV, \qquad
zV =c\frac{\partial V}{\partial c}-\beta V+U.
\end{gather}
The solution of the system (\ref{UV laplace}) is given by
\begin{gather*}
U(c,z) =\frac{1}{z}\,{}_{1}F_{2}\left[
\begin{matrix}
1\\
1-z,\ -\beta-z
\end{matrix}
;c\right] ,\qquad
V(c,z) =\frac{1}{z ( z+\beta ) }\,{}_{1}
F_{2}\left[
\begin{matrix}
1\\
1-z,\ 1-\beta-z
\end{matrix};c\right] .
\end{gather*}

In this case
\begin{gather*}
\mu_{0}(c) ={}_{0}F_{1}\left[
\begin{matrix}
-\\
\beta+1
\end{matrix}
;c\right] ,\qquad
\mu_{1}(c) =\frac{c}{\beta+1}\,{}_{0}F_{1}\left[
\begin{matrix}
-\\
\beta+2
\end{matrix}
;c\right] ,
\end{gather*}
and since
\begin{gather*}
 \frac{1}{z}\,{}_{1}F_{2}\left[
\begin{matrix}
1\\
1-z,\ -\beta-z
\end{matrix}
;c\right] \,{}_{0}F_{1}\left[
\begin{matrix}
-\\
\beta+1
\end{matrix}
;c\right] \\
 \qquad{} +\frac{1}{z(z+\beta) }\,{}_{1}F_{2}\left[
\begin{matrix}
1\\
1-z,\ 1-\beta-z
\end{matrix}
;c\right] \frac{c}{\beta+1}\,{}_{0}F_{1}\left[
\begin{matrix}
-\\
\beta+2
\end{matrix}
;c\right]  =\frac{1}{z}\,{}_{1}F_{2}\left[
\begin{matrix}
-z\\
1-z,\ \beta+1
\end{matrix}
;c\right] ,
\end{gather*}
we see that (\ref{laplaceG}) is satisf\/ied.

\subsection{Generalized Meixner}\label{section3.4}

\strut These polynomials are orthogonal with respect to the weight function~\cite{MR2749070}
\begin{gather*}
\rho( x;\alpha,\beta,c) =\frac{(\alpha)_{x}
}{(\beta+1)_{x}}\frac{c^{x}}{x!},\qquad c,\alpha>0,\qquad
\beta>-1.
\end{gather*}
In this case, we have
\begin{gather*}
\mu_{0}(c) ={}_{1}F_{1}\left[
\begin{matrix}
\alpha\\
\beta+1
\end{matrix}
;c\right] =M ( \alpha,\beta+1,c ) ,
\end{gather*}
where $M(a,b,c) $ is Kummer's (conf\/luent hypergeometric)
function \cite[(13.2.2)]{MR2723248}, and hence
\begin{gather*}
\xi=1,\qquad(\lambda,\tau) =(1,0) .
\end{gather*}
From (\ref{ODE}) we see that $\mu_{0}(c) $ satisf\/ies the ODE
\begin{gather*}
\vartheta ( \vartheta+\beta ) \mu_{0}-c ( \vartheta +\alpha ) \mu_{0}=0,
\end{gather*}
which implies
\begin{gather*}
\mu_{2}=c\alpha\mu_{0}+(c-\beta) \mu_{1},
\end{gather*}
and therefore
\begin{gather*}
\sigma_{0}(c) =\alpha c,\qquad\sigma_{1}(c) =c-\beta.
\end{gather*}
Proposition \ref{main} gives
\begin{gather*}
\mu_{n}(c) =\vec{P}_{n}(c) \cdot\vec{\mu}(c) ,
\end{gather*}
with $\vec{P}_{n}(c) $ def\/ined by~(\ref{P0}) and
\begin{gather*}
\vec{P}_{n+1}=c\frac{d \vec{P}_{n}}{dc}+ \begin{bmatrix}
0 & \alpha c\\
1 & c-\beta
\end{bmatrix}
\vec{P}_{n}.
\end{gather*}

To obtain the Stieltjes transform of $\vec{P}_{n}(c)$, let's write
\begin{gather*}
\vec{P}_{n}(c) =\left[
\begin{matrix}
Q_{n}(c) \\
R_{n}(c)
\end{matrix}
\right] ,\qquad\vec{S}_{P}(c,z) =\left[
\begin{matrix}
U(c,z)\\
V(c,z)
\end{matrix}
\right] .
\end{gather*}
From (\ref{dd gen charlier}), we have
\begin{gather*}
Q_{n+1} =cQ_{n}^{\prime}+\alpha cR_{n},\qquad
R_{n+1} =cR_{n}^{\prime}+(c-\beta) R_{n}+Q_{n},
\end{gather*}
with $Q_{0}=1$, $R_{0}=0$. Using (\ref{shift}), we get
\begin{gather}\label{UV laplace 1}
zU-1 =c\frac{\partial U}{\partial c}+\alpha cV,\qquad
zV =c\frac{\partial V}{\partial c}+(c-\beta) V+U.
\end{gather}

If we represent the functions $U$, $V$ as
\begin{gather*}
U(c,z)=
\sum\limits_{n=0}^{\infty}
u_{n}(z) c^{n},\qquad V(c,z)=
\sum\limits_{n=0}^{\infty}
v_{n}(z) c^{n},
\end{gather*}
then (\ref{UV laplace 1}) gives
\begin{gather}\label{req un vn}
zu_{n}-\delta_{n,0} =nu_{n}+\alpha v_{n-1},\qquad
zv_{n} =nv_{n}+v_{n-1}-\beta v_{n}+u_{n}.
\end{gather}
We have (assuming that $u_{n}=v_{n}=0$ for $n<0)$
\begin{gather*}
u_{0}=\frac{1}{z},\qquad v_{0}=\frac{1}{z(z+\beta) },
\end{gather*}
and%
\begin{gather}
u_{n}=(z+\beta-n) v_{n}-v_{n-1}. \label{un}
\end{gather}
Using (\ref{un}) in (\ref{req un vn}), we obtain
\begin{gather*}
v_{n}=\frac{z+\alpha-n}{( z-n) (z+\beta-n) }v_{n-1},\qquad n=1,2,\ldots,
\end{gather*}
and therefore
\begin{gather*}
v_{n}=\frac{(-1) ^{n}}{z(z+\beta) }\frac{(
1-\alpha-z)_{n}}{( 1-z)_{n}( 1-\beta-z)
_{n}},\qquad n=0,1,\ldots.
\end{gather*}
From (\ref{un}), it follows that
\begin{gather*}
u_{n}=\alpha\frac{(-1) ^{n}}{z(z+\alpha) }
\frac{(-\alpha-z)_{n}}{(1-z)_{n}(-\beta-z)_{n}},\qquad n=1,2,\ldots.
\end{gather*}
Thus,
\begin{gather*}
U(c,z) =\frac{1}{z}+\frac{\alpha}{z(z+\alpha)
}
\sum\limits_{n=1}^{\infty}
\frac{(-\alpha-z)_{n}}{(1-z)_{n}(
-\beta-z)_{n}}(-c) ^{n}\\
\hphantom{U(c,z)}{} =\frac{1}{z}-\frac{\alpha}{z(z+\alpha) }+\frac{\alpha
}{z(z+\alpha) }
\sum\limits_{n=0}^{\infty}
\frac{(-\alpha-z)_{n}}{(1-z)_{n}(
-\beta-z)_{n}}(-c) ^{n}\\
\hphantom{U(c,z)}{}
 =\frac{1}{z+\alpha}+\frac{\alpha}{z(z+\alpha) }\,{}_{2}F_{2}\left[
\begin{matrix}
1,-\alpha-z\\
1-z,\ -\beta-z
\end{matrix}
;-c\right] ,
\end{gather*}
and
\begin{gather*}
V(c,z) =\frac{1}{z(z+\beta) }\,{}_{2}F_{2}\left[
\begin{matrix}
1,\ 1-\alpha-z\\
1-z,\ 1-\beta-z
\end{matrix}
;-c\right] .
\end{gather*}

In this case
\begin{gather*}
\mu_{0}(c) ={}_{1}F_{1}\left[
\begin{matrix}
\alpha\\
\beta+1
\end{matrix}
;c\right] ,\qquad
\mu_{1}(c) =\frac{\alpha c}{\beta+1}\,{}_{1}F_{1}\left[
\begin{matrix}
\alpha+1\\
\beta+2
\end{matrix}
;c\right] ,
\end{gather*}
and since
\begin{gather*}
 \left( \frac{1}{z+\alpha}+\frac{\alpha}{z(z+\alpha) }
\,{}_{2}F_{2}\left[
\begin{matrix}
1,-\alpha-z\\
1-z,\ -\beta-z
\end{matrix}
;-c\right] \right) \,{}_{1}F_{1}\left[
\begin{matrix}
\alpha\\
\beta+1
\end{matrix}
;c\right] \\
\! \qquad {} +\frac{1}{z(z+\beta) }\,{}_{2}F_{2}\left[
\begin{matrix}
1,\ 1-\alpha-z\\
1-z,\ 1-\beta-z
\end{matrix}
;-c\right] \frac{\alpha c}{\beta+1}\,{}_{1}F_{1}\left[
\begin{matrix}
\alpha+1\\
\beta+2
\end{matrix}
;c\right]   =\frac{1}{z}\,{}_{2}F_{2}\left[
\begin{matrix}
-z,\ \alpha\\
1-z,\ \beta+1
\end{matrix}
;c\right] ,
\end{gather*}
we see that (\ref{laplaceG}) is satisf\/ied.

\subsection{Generalized Krawtchouk}\label{section3.5}

\strut These polynomials are orthogonal with respect to the weight function~\cite{MR3227440}
\begin{gather*}
\rho(x;\alpha,N,c) =(\alpha)_{x} (
-N )_{x}\frac{c^{x}}{x!},\qquad c<0,\qquad\alpha>0,\qquad N\in
\mathbb{N}.
\end{gather*}
In this case, we have
\begin{gather*}
\mu_{0}(c) ={}_{2}F_{0}\left[
\begin{matrix}
\alpha,\ -N\\
-
\end{matrix}
;c\right] =C_{N}\big({-}\alpha;-c^{-1}\big) ,
\end{gather*}
where $C_{N}(x;\mu) $ is the Charlier polynomial
\cite{MR2300274}, and hence%
\begin{gather*}
\xi=1,\qquad(\lambda,\tau) =(0,1) .
\end{gather*}
From (\ref{ODE}) we see that $\mu_{0}(c) $ satisf\/ies the ODE
\begin{gather*}
\vartheta\mu_{0}-c(\vartheta+\alpha) (\vartheta-N)
\mu_{0}=0,
\end{gather*}
which implies
\begin{gather*}
c\mu_{2}=\alpha Nc\mu_{0}+(Nc-\alpha c+1) \mu_{1},
\end{gather*}
and therefore%
\begin{gather*}
\sigma_{0}(c) =\alpha Nc,\qquad\sigma_{1}(c)
=(N-\alpha) c+1.
\end{gather*}
Proposition \ref{main} gives
\begin{gather*}
\mu_{n}(c) =c^{-n}\vec{P}_{n}(c)
\cdot\vec{\mu}(c) ,
\end{gather*}
with $\vec{P}_{n}(c) $ def\/ined by (\ref{P0}) and
\begin{gather*}
\vec{P}_{n+1}=c^{2}\frac{d \vec{P}_{n}}{dc}+
\begin{bmatrix}
-nc & \alpha Nc\\
c & \left( N-\alpha-n\right) c+1
\end{bmatrix}
\vec{P}_{n}(c) .
\end{gather*}

\subsection{Generalized Hahn polynomials of type I}\label{section3.6}

These polynomials are orthogonal with respect to the weight function~\cite{MR3227440}
\begin{gather}
\rho( x;\alpha_{1},\alpha_{2},\beta;c) =\frac{( \alpha
_{1})_{x}( \alpha_{2})_{x}}{(\beta+1)
_{x}}\frac{c^{x}}{x!}, \label{rho gen hahn}
\end{gather}
where $0<c<1$, $\alpha_{1},\alpha_{2}>0$, $\beta>-1$. In this case, we have%
\begin{gather*}
\mu_{0}(c) = {}_{2}F_{1}\left[
\begin{matrix}
\alpha_{1},\ \alpha_{2}\\
\beta+1
\end{matrix}
;c\right] ,
\end{gather*}
and hence%
\begin{gather*}
\xi=1,\qquad(\lambda,\tau) =(1,-1) .
\end{gather*}
From (\ref{ODE}) we see that $\mu_{0}(c) $ satisf\/ies the ODE
\begin{gather*}
\vartheta ( \vartheta+\beta ) \mu_{0}-c ( \vartheta+\alpha
_{1} ) ( \vartheta+\alpha_{2} ) \mu_{0}=0, 
\end{gather*}
which implies
\begin{gather*}
(1-c) \mu_{2}=\allowbreak\alpha_{1}\alpha_{2}c\mu_{0}+\big[
 ( \alpha_{1}+\alpha_{2} ) c-\beta\big] \mu_{1},
\end{gather*}
and therefore
\begin{gather*}
\sigma_{0}(c) =\alpha_{1}\alpha_{2}c,\qquad\sigma_{1} (
c ) = ( \alpha_{1}+\alpha_{2} ) c-\beta.
\end{gather*}
Proposition~\ref{main} gives
\begin{gather*}
\mu_{n}(c) =(1-c) ^{-n} \vec{P}_{n}(c) \cdot\vec{\mu}(c) ,
\end{gather*}
with $\vec{P}_{n}(c) $ def\/ined by~(\ref{P0}) and
\begin{gather*}
 \vec{P}_{n+1}=c(1-c) \frac{d \vec{P}
_{n}}{dc}+
\begin{bmatrix}
nc & \alpha_{1}\alpha_{2}(1-c) c\\
 1-c & ( \alpha_{1}+\alpha_{2}+n ) c-\beta
\end{bmatrix}
\vec{P}_{n}.
\end{gather*}

\subsection{Hahn polynomials}\label{section3.7}

The Hahn polynomials \cite[(18.20.5)]{MR2723248}
\begin{gather*}
H_{n}(x;\alpha,\beta,N) ={}_{3}F_{2}\left(
\begin{matrix}
-n,-x,n+\alpha+\beta+1\\
-N,\alpha+1
\end{matrix}
;1\right)
\end{gather*}
are orthogonal with respect to the weight function \cite[(18.19)]{MR2723248}
\begin{gather*}
\rho_{H}(x;\alpha,\beta,N) =\frac{(\alpha+1)_{x}}{x!}\frac{(\beta+1)_{N-x}}{(N-x)!},\qquad
\alpha,\beta\notin[-N,-1] ,\qquad N\in\mathbb{N},
\end{gather*}
or
\begin{gather*}
\rho_{H}(x;\alpha,\beta,N) =\binom{\alpha+x}{x}\binom{\beta
+N-x}{N-x}.
\end{gather*}
The relation between $\rho_{H}(x;\alpha,\beta,N) $ and the
weight function of the generalized Hahn polyno\-mials~(\ref{rho gen hahn}) is
given by
\begin{gather*}
\rho_{H}(x;\alpha,\beta,N) =\frac{(\beta+1)_{N}}{N!}\rho ( x;\alpha+1,-N,-N-\beta-1;1 ) .
\end{gather*}

From (\ref{mun gen}), we get (see also \cite[equation~(41)]{MR3286553})
\begin{gather*}
\mu_{n}^{H}=
\sum\limits_{k=0}^{n}
\genfrac{\{}{\}}{0pt}{}{n}{k}
(\alpha+1)_{k} \frac{( \alpha+\beta+2+k)_{N-k}}{( N-k) !},
\end{gather*}
where we have used
\begin{gather*}
\mu_{0}^{H}=\frac{(\alpha+\beta+2)_{N}}{N!}.
\end{gather*}

To obtain the generating function of $\mu_{n}^{H}$, we can use~(\ref{gen Stirling}), and we get
\begin{gather*}
G_{\mu^{H}}(z)=\frac{( \alpha+\beta+2)_{N}}{N!}\,{}_{2}
F_{1}\left[
\begin{matrix}
-N,\ \alpha+1\\
\alpha+\beta+2
\end{matrix}
;1-e^{z}\right] .
\end{gather*}
This generating function seems not to have been considered before.

\begin{remark}
Except for the Bell polynomials (\ref{bell}), all the other families
$\vec{P}_{n}(c) $ associated to the moments $\mu
_{n}(c) $ seem to be new.
\end{remark}

\begin{remark}
Stieltjes transforms of the generalized Krawtchouk and generalized Hahn
polynomials of type~I have been omitted because of the complexity of the formulas.
\end{remark}

\section{Conclusion}\label{section4}

We have developed a technique for computing the moments of weight functions of
hypergeometric type. We have shown that the moments are linear combinations of
the f\/irst $\xi+1$ moments with polynomial coef\/f\/icients in the parameter~$c$.
We have also constructed generating functions for both the moments and the
polynomials associated with them.

All the results in Sections~\ref{section3.3}--\ref{section3.5} are new, and give ef\/f\/icient ways of
computing the moments of the discrete semiclassical polynomials of class~$1$.
The same method can be used to f\/ind the moments of polynomials of class~$s>1$.

In a previous work \cite{MR3277948}, we found the asymptotic zero distribution
of polynomial families satisfying f\/irst-order dif\/ferential-recurrence
relations of the form~(\ref{eq:diff2}). It would be interesting to know if our
results could be extended to include the polynomials $\vec{P}_{n}(c)$ studied in this paper.

\subsection*{Acknowledgements}

We want to thank the anonymous referees and the editors of this
special issue for providing us with extremely valuable suggestions that helped
us to greatly improve our f\/irst draft of the paper.

\pdfbookmark[1]{References}{ref}
\LastPageEnding


\begin{thebibliography}{99}
\footnotesize\itemsep=0pt

\bibitem{MR0184042}
Akhiezer N.I., The classical moment problem and some related questions in
 analysis, Hafner Publishing Co., New York, 1965.

\bibitem{MR1688958}
Andrews G.E., Askey R., Roy R., Special functions, \href{http://dx.doi.org/10.1017/CBO9781107325937}{\textit{Encyclopedia of
 Mathematics and its Applications}}, Vol.~71, Cambridge University Press,
 Cambridge, 1999.

\bibitem{MR2749070}
Boelen L., Filipuk G., Van~Assche W., Recurrence coef\/f\/icients of generalized
 {M}eixner polynomials and {P}ainlev\'e equations, \href{http://dx.doi.org/10.1088/1751-8113/44/3/035202}{\textit{J.~Phys.~A: Math.
 Theor.}} \textbf{44} (2011), 035202, 19~pages.

\bibitem{MR0481884}
Chihara T.S., An introduction to orthogonal polynomials, \textit{Mathematics
 and its Applications}, Vol.~13, Gordon and Breach Science Publishers, New
 York~-- London~-- Paris, 1978.

\bibitem{MR3055670}
Clarkson P.A., Recurrence coef\/f\/icients for discrete orthonormal polynomials and
 the {P}ainlev\'e equations, \href{http://dx.doi.org/10.1088/1751-8113/46/18/185205}{\textit{J.~Phys.~A: Math. Theor.}} \textbf{46}
 (2013), 185205, 18~pages, \href{http://arxiv.org/abs/1301.2396}{arXiv:1301.2396}.

\bibitem{MR2300274}
Dominici D., Asymptotic analysis of the {A}skey-scheme. {I}.~{F}rom
 {K}rawtchouk to {C}harlier, \href{http://dx.doi.org/10.2478/s11533-006-0041-6}{\textit{Cent. Eur.~J. Math.}} \textbf{5} (2007),
 280--304, \href{http://arxiv.org/abs/math.CA/0501072}{math.CA/0501072}.

\bibitem{MR2364955}
Dominici D., Asymptotic analysis of the {H}ermite polynomials from their
 dif\/ferential-dif\/ference equation, \href{http://dx.doi.org/10.1080/10236190701458824}{\textit{J.~Difference Equ. Appl.}}
 \textbf{13} (2007), 1115--1128, \href{http://arxiv.org/abs/math.CA/0601078}{math.CA/0601078}.

\bibitem{MR2401156}
Dominici D., Asymptotic analysis of generalized {H}ermite polynomials,
 \href{http://dx.doi.org/10.1524/anly.2008.0911}{\textit{Analysis (Munich)}} \textbf{28} (2008), 239--261,
 \href{http://arxiv.org/abs/math.CA/0606324}{math.CA/0606324}.

\bibitem{MR2390273}
Dominici D., Asymptotic analysis of the {K}rawtchouk polynomials by the {WKB}
 method, \href{http://dx.doi.org/10.1007/s11139-007-9078-9}{\textit{Ramanujan~J.}} \textbf{15} (2008), 303--338,
 \href{http://arxiv.org/abs/math.CA/0501042}{math.CA/0501042}.

\bibitem{MR2427672}
Dominici D., Some properties of the inverse error function, in Tapas in
 Experimental Mathematics, \href{http://dx.doi.org/10.1090/conm/457/08910}{\textit{Contemp. Math.}}, Vol.~457, Amer. Math.
 Soc., Providence, RI, 2008, 191--203.

\bibitem{MR2583007}
Dominici D., Asymptotic analysis of the {B}ell polynomials by the ray method,
 \href{http://dx.doi.org/10.1016/j.cam.2009.02.082}{\textit{J.~Comput. Appl. Math.}} \textbf{233} (2009), 708--718,
 \href{http://arxiv.org/abs/0709.0252}{arXiv:0709.0252}.

\bibitem{MH}
Dominici D., Mehler--{H}eine type formulas for {C}harlier and {M}eixner
 polynomials, \href{http://dx.doi.org/10.1007/s11139-014-9665-5}{\textit{Ramanujan~J.}} \textbf{39} (2016), 271--289,
 \href{http://arxiv.org/abs/1406.6193}{arXiv:1406.6193}.

\bibitem{MR2741218}
Dominici D., Driver K., Jordaan K., Polynomial solutions of
 dif\/ferential-dif\/ference equations, \href{http://dx.doi.org/10.1016/j.jat.2009.05.010}{\textit{J.~Approx. Theory}} \textbf{163}
 (2011), 41--48, \href{http://arxiv.org/abs/0902.0041}{arXiv:0902.0041}.

\bibitem{MR2966475}
Dominici D., Knessl C., Asymptotic analysis of a family of polynomials
 associated with the inverse error function, \href{http://dx.doi.org/10.1216/RMJ-2012-42-3-847}{\textit{Rocky Mountain~J. Math.}}
 \textbf{42} (2012), 847--872, \href{http://arxiv.org/abs/0811.2243}{arXiv:0811.2243}.

\bibitem{MR3227440}
Dominici D., Marcell{\'a}n F., Discrete semiclassical orthogonal polynomials of
 class one, \href{http://dx.doi.org/10.2140/pjm.2014.268.389}{\textit{Pacific~J. Math.}} \textbf{268} (2014), 389--411,
 \href{http://arxiv.org/abs/1211.2005}{arXiv:1211.2005}.

\bibitem{MR3277948}
Dominici D., Van~Assche W., Zero distribution of polynomials satisfying a
 dif\/ferential-dif\/ference equation, \href{http://dx.doi.org/10.1142/S0219530514500390}{\textit{Anal. Appl. (Singap.)}} \textbf{12}
 (2014), 635--666, \href{http://arxiv.org/abs/1312.0698}{arXiv:1312.0698}.

\bibitem{MR2996960}
Filipuk G., Van~Assche W., Recurrence coef\/f\/icients of generalized {C}harlier
 polynomials and the f\/ifth {P}ainlev\'e equation, \href{http://dx.doi.org/10.1090/S0002-9939-2012-11468-6}{\textit{Proc. Amer. Math.
 Soc.}} \textbf{141} (2013), 551--562, \href{http://arxiv.org/abs/1106.2959}{arXiv:1106.2959}.

\bibitem{MR2936305}
Hildebrandt E.H., Systems of polynomials connected with the {C}harlier
 expansions and the {P}earson dif\/ferential and dif\/ference equations, Ph.D.~Thesis, University of Michigan, 1932.

\bibitem{MR1451259}
Ismail M.E.H., Stanton D., Classical orthogonal polynomials as moments,
 \href{http://dx.doi.org/10.4153/CJM-1997-024-9}{\textit{Canad.~J. Math.}} \textbf{49} (1997), 520--542.

\bibitem{MR1627382}
Ismail M.E.H., Stanton D., More orthogonal polynomials as moments, in
 Mathematical Essays in Honor of {G}ian-{C}arlo {R}ota ({C}ambridge, {MA},
 1996), \href{http://dx.doi.org/10.1007/978-1-4612-4108-9_20}{\textit{Progr. Math.}}, Vol.~161, Birkh\"auser Boston, Boston, MA,
 1998, 377--396.

\bibitem{MR1224449}
Johnson N.L., Kotz S., Kemp A.W., Univariate discrete distributions, 2nd ed.,
 Wiley Series in Probability and Mathematical Statistics: Applied Probability
 and Statistics, John Wiley \& Sons, Inc., New York, 1992.

\bibitem{Kemp}
Kemp A.W., Studies in univariate discrete distribution theory based on the
 generalized hypergeometric function and associated dif\/ferential equations,
 Ph.D.~Thesis, The Queen's University of Belfast, 1968.

\bibitem{MR2656096}
Koekoek R., Lesky P.A., Swarttouw R.F., Hypergeometric orthogonal polynomials
 and their {$q$}-analogues, \href{http://dx.doi.org/10.1007/978-3-642-05014-5}{\textit{Springer Monographs in Mathematics}},
 Springer-Verlag, Berlin, 2010.

\bibitem{MR0458081}
Kre{\u\i}n M.G., Nudel'man A.A., The {M}arkov moment problem and extremal
 problems, \textit{Translations of Mathematical Monographs}, Vol.~50, Amer.
 Math. Soc., Providence, R.I., 1977.

\bibitem{MR1270222}
Maroni P., Une th\'eorie alg\'ebrique des polyn\^omes orthogonaux.
 {A}pplication aux polyn\^omes orthogonaux semi-classiques, in Orthogonal
 Polynomials and their Applications ({E}rice, 1990), \textit{IMACS Ann.
 Comput. Appl. Math.}, Vol.~9, Baltzer, Basel, 1991, 95--130.

\bibitem{MR2798171}
Mohammad-Noori M., Some remarks about the derivation operator and generalized
 {S}tirling numbers, \textit{Ars Combin.} \textbf{100} (2011), 177--192,
 \href{http://arxiv.org/abs/1012.3948}{arXiv:1012.3948}.

\bibitem{Njionou}
Njionou~Sadjang P., Moments of classical orthogonal polynomials, Ph.D.~Thesis,
 Universit\"at Kassel, 2013.

\bibitem{MR3286553}
Njionou~Sadjang P., Koepf W., Foupouagnigni M., On moments of classical
 orthogonal polynomials, \href{http://dx.doi.org/10.1016/j.jmaa.2014.10.087}{\textit{J.~Math. Anal. Appl.}} \textbf{424} (2015),
 122--151.

\bibitem{MR2723248}
Olver F.W.J., Lozier D.W., Boisvert R.F., Clark C.W. (Editors), N{IST} handbook
 of mathematical functions, U.S.~Department of Commerce, National Institute of
 Standards and Technology, Washington, DC, Cambridge University Press,
 Cambridge, 2010.

\bibitem{Pearson}
Pearson K., Contributions to the mathematical theory of evolution. {II}.~Skew
 variation in homogeneous material, \href{http://dx.doi.org/10.1098/rsta.1895.0010}{\textit{Philos. Trans. Roy. Soc. London
 Ser.~A}} \textbf{186} (1895), 343--414.

\bibitem{MR1761401}
Schoutens W., Stochastic processes and orthogonal polynomials, \href{http://dx.doi.org/10.1007/978-1-4612-1170-9}{\textit{Lecture
 Notes in Statistics}}, Vol.~146, Springer-Verlag, New York, 2000.

\bibitem{MR0008438}
Shohat J.A., Tamarkin J.D., The problem of moments, \textit{American
 Mathematical Society Mathematical Surveys}, Vol.~1, Amer. Math. Soc., New
 York, 1943.

\bibitem{MR2172781}
Wilf H.S., Generatingfunctionology, 3rd ed., A~K~Peters, Ltd., Wellesley, MA,
 2006.

\end{thebibliography}
\end{document}